\documentclass{amsart}
\usepackage{amsfonts,amssymb,amscd,amsmath,enumerate,verbatim,calc}
\usepackage[all]{xy}

\newcommand{\wrt}{with respect to}

\newcommand{\n}{\mathfrak{n} }
\newcommand{\m}{\mathfrak{m} }

\newcommand{\xb}{\mathbf{x} }

\newcommand{\Sc}{\mathcal{S} }

\newcommand{\rt}{\rightarrow}

\newcommand{\wh}{\widehat }

\newcommand{\charp}{\operatorname{char}}

\newcommand{\Ass}{\operatorname{Ass}}

\theoremstyle{plain}

\newtheorem{theorem}{Theorem}[section]

\theoremstyle{definition}

\theoremstyle{remark}

\begin{document}

\title[Associate primes]{Associate primes of local cohomology modules over certain quotients of regular rings}
\author{Tony~J.~Puthenpurakal}
\date{\today}
\address{Department of Mathematics, IIT Bombay, Powai, Mumbai 400 076, India}

\email{tputhen@gmail.com}
\subjclass{Primary 13D45, 13A30; Secondary 13N10, 14B15}
\keywords{local cohomology, graded local cohomology, Koszul cohomology, Rees Algebras,  ring of differential operators}
 \begin{abstract}
Let $R$ be a regular ring containing a field $k$.  Let $\xb = x_1, \ldots, x_r$ be a regular sequence in $R$ such that  $R/(\xb)$ is a regular ring. Fix $m \geq 1$. Set $A_m = R/(\xb)^m$.
We show that for any ideal $Q$ of $A_m$ the set $\Ass H^i_Q(A_m)$ is a finite set for $i \geq 0$, in the following cases:
\begin{enumerate}
  \item $\charp k = p > 0$.
  \item $\charp k = 0$, $R$ is local or a smooth affine algebra over $k$.
\end{enumerate}
\end{abstract}
 \maketitle
\section{introduction}
Let $S$ be a Noetherian ring and let $I$ be an ideal in $S$. Let $H^i_I(S)$ be the $i^{th}$-local cohomology module of $S$ \wrt \ $I$. The question whether $\Ass H^i_I(S)$ is a finite set was first raised by Huneke. If $S$ is a regular ring containing a field $k$ with $\charp k = p > 0$ then Huneke and Sharp proved that $\Ass H^i_I(S)$ is a finite set for all $i \geq 0$, see \cite{HS}. If $S$ is a regular ring containing a field $k$ of characteristic zero then Lyubeznik proved that $\Ass H^i_I(S)$ is a finite set  or all $i \geq 0$ if either $S$ is local or if $S$ is a smooth affine algebra over $k$; see \cite{Lyu-1}.
In  \cite{N}, it is  proved that
  if $S \rt R$ is a homomorphism of Noetherian rings that splits, then for every ideal $I$ in $S$ and every non-negative integer $i$, if $\Ass_R H^i_{IR}(R)$ is finite then $\Ass_S H^i_I(S)$ is finite. The first example of a ring $S$ such that $\Ass H^i_I(S)$ is an infinite set  was  constructed by Singh \cite{S}. In the paper \cite{SS}, the authors constructed  examples of rings with good properties such that $\Ass H^i_I(S)$ is infinite.
   In this paper we construct large examples of singular rings $S$ such that $\Ass H^i_I(S)$ is finite for all $i \geq 0$.

   \s \label{const} Let $R$ be a regular ring containing a field $k$.  Let $\xb = x_1, \ldots, x_r$ be a regular sequence in $R$ such that  $R/(\xb)$ is a regular ring. Fix $m \geq 1$. Set $A_m = R/(\xb)^m$.
We show the following result:
\begin{theorem}\label{main}
(with hypothesis as in \ref{const}. Let $I$ be an ideal in $R$. Let $P$ be a prime ideal in $R$ containing $\xb$. Fix $i \geq 0$ and $m \geq 1$. The following assertions are equivalent:
\begin{enumerate}[\rm (i)]
  \item  $P \in \Ass_R H^i_I(A_1)$.
  \item $P \in \Ass_R H^i_I(A_m)$.
\end{enumerate}
\end{theorem}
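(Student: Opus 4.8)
The plan is to fix the prime $P \supseteq \xb$, localize at $P$, and rewrite the membership $P \in \Ass_R H^i_I(A_m)$ as the depth statement $\depth_{R_P} H^i_I(A_m)_P = 0$, i.e. the existence of a nonzero element of $H^i_I(A_m)_P$ annihilated by $PR_P$. Observe first that the hypothesis $P \supseteq \xb$ is automatic: by $R$-linearity of the functor $H^i_I(-)$, the ideal $\ann_R A_m = (\xb)^m$ kills $H^i_I(A_m)$, so every $P \in \Ass_R H^i_I(A_m)$ contains $\sqrt{(\xb)^m} \supseteq (\xb)$, and likewise for $A_1$. Hence the theorem is exactly the equality $\Ass_R H^i_I(A_m) = \Ass_R H^i_I(A_1)$ for every $m$, which I would prove by induction on $m$, the case $m=1$ being trivial.

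The one structural input I would use is that, since $\xb$ is a regular sequence with $R/(\xb)$ regular, the associated graded ring $\bigoplus_{j\geq 0}(\xb)^j/(\xb)^{j+1}$ is the polynomial ring $A_1[T_1,\dots,T_r]$; in particular each layer $(\xb)^{m-1}/(\xb)^m$ is a \emph{free} $A_1$-module, say of rank $c = \binom{m+r-2}{r-1}$. This gives a short exact sequence of $R$-modules $0 \rt A_1^{c} \rt A_m \rt A_{m-1} \rt 0$, and applying $H^\bullet_I(-)$ together with $H^i_I(A_1^{c}) = H^i_I(A_1)^{c}$ produces the long exact sequence
\[ \cdots \rt H^{i-1}_I(A_{m-1}) \xrightarrow{\ \delta_{i-1}\ } H^i_I(A_1)^{c} \xrightarrow{\ \alpha_i\ } H^i_I(A_m) \xrightarrow{\ \beta_i\ } H^i_I(A_{m-1}) \rt \cdots, \]
which splits into $0 \rt \image\alpha_i \rt H^i_I(A_m) \rt \image\beta_i \rt 0$, with $\image\alpha_i = H^i_I(A_1)^{c}/\image\delta_{i-1}$ a quotient of $H^i_I(A_1)^{c}$ and $\image\beta_i \sub H^i_I(A_{m-1})$ a submodule.

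One half of the bookkeeping is then within reach. Since $\image\beta_i$ is a submodule of $H^i_I(A_{m-1})$, we get $\Ass\image\beta_i \sub \Ass H^i_I(A_{m-1}) = \Ass H^i_I(A_1)$ by the inductive hypothesis; and after localizing at $P$ the depth lemma applied to the displayed short exact sequence controls the middle term from the two outer ones. The genuinely hard part is the submodule $\image\alpha_i = H^i_I(A_1)^{c}/\image\delta_{i-1}$: the associated primes of a quotient are only forced into the \emph{support} of $H^i_I(A_1)$, not into $\Ass H^i_I(A_1)$, so a priori the connecting map $\delta_{i-1}$ could manufacture a socle element over $P$ lying in $\mathrm{Supp}\,H^i_I(A_1)\setminus \Ass H^i_I(A_1)$, breaking the inclusion $\Ass H^i_I(A_m) \sub \Ass H^i_I(A_1)$; the reverse persistence of each $P \in \Ass H^i_I(A_1)$ into $H^i_I(A_m)$ hinges on the same map not swallowing the relevant socle. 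Controlling the connecting homomorphism $\delta_{\bullet}$ is therefore the main obstacle and the crux of the whole argument.

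To overcome it I would avoid treating the short exact sequence abstractly and instead exploit that all maps in the family $\{A_n\}$ are governed by multiplication by the $x_i$, i.e. by the Rees/associated-graded structure. Concretely, I would assemble the modules $H^i_I(A_n)$ together with their connecting maps into a single finitely generated graded object over the Rees algebra $\bigoplus_n (\xb)^n$ (or over $R[t]$), so that a putative socle element over $P$ lives in one finitely generated graded module and can be matched degree-by-degree with a socle element of $H^i_I(A_1)$; this would force $\image\delta_{i-1}$ to contribute no associated prime outside $\Ass H^i_I(A_1)$ while showing every $P \in \Ass H^i_I(A_1)$ survives. As a fallback I would pass to the $(\xb)$-adic completion and apply Matlis duality, converting the socle computations into $\operatorname{Tor}$ computations against the free $A_1$-modules $(\xb)^{j}/(\xb)^{j+1}$, where freeness makes the comparison transparent. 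Either route collapses the induction onto the $m=1$ case and yields the asserted equivalence.
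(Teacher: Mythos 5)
Your reduction to the complete local case, your observation that $P \supseteq (\xb)$ is automatic since $(\xb)^m$ kills $H^i_I(A_m)$, and above all your diagnosis of the crux are all correct: the connecting homomorphism in the long exact sequence coming from $0 \rt A_1^{c} \rt A_m \rt A_{m-1} \rt 0$ is exactly the obstruction, and your instinct to assemble the whole family over the Rees algebra is precisely the paper's move. The paper works over the extended Rees algebra $\Sc = R[(\xb)t, t^{-1}]$, sets $W = \Sc_{t^{-1}}/\Sc = \bigoplus_{n \geq 1} R/(\xb)^n$, and studies $V = \ker\bigl(t^{-1} \colon H^i_{I\Sc}(W)(1) \rt H^i_{I\Sc}(W)\bigr)$, whose graded component $V_{n}$ is exactly your $\image \alpha_i$ sitting in $0 \rt V_n \rt H^i_I(R/(\xb)^{n+1}) \rt H^i_I(R/(\xb)^{n})$; detecting $\m \in \Ass V_n$ via nonvanishing of the top Koszul homology $H_{d-r}(\n, V_n)$ is also the paper's bookkeeping.

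The gap is in the mechanism you propose to exploit this assembly. First, there is no ``finitely generated graded object over the Rees algebra'' here: local cohomology modules such as $H^i_I(A_n)$ are almost never finitely generated over $R$, so neither $V$ nor $\bigoplus_n H^i_I(A_n)$ is finitely generated over $\Sc$, and the degree-by-degree matching of socle elements has no finiteness to lean on. Second, and more fundamentally, gradedness alone carries no rigidity: a graded module over the associated graded ring $G$ can have $H_{d-r}(\n, V)_0 \neq 0$ while $H_{d-r}(\n, V)_{m-1} = 0$ (take a module concentrated in a single degree), and the whole content of the theorem is the rigidity statement that $H_{d-r}(\n, V)_j \neq 0$ for \emph{one} $j \geq 0$ if and only if it is nonzero for \emph{all} $j \geq 0$. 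This is supplied by machinery absent from your sketch: in characteristic $p$, $V$ is a graded $F_G$-finite $F_G$-module and one quotes \cite[1.1, 1.6(b)]{P2}; in characteristic $0$, $V$ is a graded holonomic generalized Eulerian $D$-module (being a graded quotient of $H^i_I(G)$) and one quotes \cite[1.20, 6.1]{P1}. Your Matlis-duality fallback has the same defect: dualizing produces another long exact sequence whose connecting maps are just as uncontrolled, and freeness of the layers $(\xb)^{j}/(\xb)^{j+1}$ does nothing to pin them down. So the proposal reaches the right door but lacks the key; without a $D$-module/$F$-module finiteness-and-rigidity input (or a substitute for it), the induction does not close.
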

Note the result stated in the abstract easily follows from Theorem \ref{main}; see section  \ref{abs}.
We note that $P \in \Ass_R D$ if and only if $PA_P \in \Ass_{R_P}D_P$ if and only if $P\wh{A_P} \in \Ass_{\wh{R_P}}(D_P \otimes_{R_P} \wh{R_P})$.
Thus we may assume $R = K[[X_1,\ldots, X_d]]$ and $P = \m = (X_1, \ldots, X_d)$. As $\xb$ is a regular sequence in $R$ with $R/(\xb)$ regular it follows that $\xb$ is part of regular system of parameters of $R$. So we may assume without loss of generality that $x_i = X_i$ for $1\leq i \leq r$.

The proof of Theorem \ref{main} splits into two cases depending on characteristic of $k$.

\section{the case when $\charp k = p > 0$}
\s \label{setup-p} Let $K$ be a field of characteristic $p > 0$. Let $R = K[[X_1, \ldots, X_d]]$ and let $\xb = X_1, \ldots, X_r$. Let $\m = (X_1, \ldots, X_d)$. Set $G = G_\xb(R)$ be the associated graded ring of $R$ \wrt \ $\xb$. Note $G \cong R/(\xb)[X_1^*, \ldots, X_r^*]$ is a regular ring. Let $\Sc = R[(\xb)t, t^{-1}]$ be the extended Rees algebra of the ideal $(\xb)$. We note that $\Sc$ is a regular ring.  If $T$ is a regular ring containing a field $k$ of characteristic $p >0$ then the notion of $F_T$ modules and $F_T$-finite, $F_T$-modules was introduced by Lyubeznik, see \cite{Lyu-2}. We have $\Sc$ is a graded $F_\Sc$-finite, $F_\Sc$-module. Also note that
$\Sc_{t^{-1}} = R[t, t^{-1}]$ is also a graded $F_\Sc$-finite, $F_\Sc$-module. So $W = \Sc_{t^{-1}}/\Sc = \bigoplus_{n \geq 1}R/(\xb)^n$ is also a  graded $F_\Sc$-finite, $F_\Sc$-module.
Fix $m \geq 1$. Set $A_m = R/(\xb)^m$.
We show
\begin{theorem}\label{charp}(with hypotheses as in \ref{setup-p})
Let $I$ be an ideal in $R$
The following assertions are equivalent:
\begin{enumerate}[\rm (i)]
  \item  $\m \in \Ass_R H^i_I(A_1)$.
  \item $\m \in \Ass_R H^i_I(A_m)$.
\end{enumerate}
\end{theorem}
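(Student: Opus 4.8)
The plan is to package the modules $H^i_I(A_n)$ as the graded components of a single graded local cohomology module over $\Sc$ and then exploit the $F_\Sc$-structure. First I would observe that $H^i_I(A_n)$ is the degree-$n$ component of $E := H^i_{I\Sc}(W)$: since $W = \bigoplus_{n\ge1}A_n$ as a graded $R$-module, the ideal $I$ lies in $R = \Sc_0$, and local cohomology (computed by the \v{C}ech complex on generators of $I$) commutes with the grading and with restriction of scalars along $R \hookrightarrow \Sc$, one gets $E_n = H^i_I(A_n)$ for all $n$, with $E_n = 0$ for $n \le 0$. As $W$ is a graded $F_\Sc$-finite $F_\Sc$-module and $\Sc$ is regular of characteristic $p$, Lyubeznik's theory guarantees that $E$ is again a graded $F_\Sc$-finite $F_\Sc$-module; in particular $\Ass_\Sc E$ is a finite set of homogeneous primes, and since $W$ (hence $E$) is $t^{-1}$-power torsion, every such prime contains $t^{-1}$ and contracts to a prime of $R$ containing $\xb$.

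Next I would reduce the two conditions to a statement about a single graded module. Because $\m$ is maximal, $\m \in \Ass_R E_n$ if and only if the socle $(0 :_{E_n} \m)$ is nonzero. Set $\si := (0 :_E \m) = \Hom_R(R/\m, E)$. A direct check shows $\si$ is a graded $\Sc$-submodule of $E$ (if $a \in \m$, $s \in \Sc$, $z \in \si$, then $a(sz) = s(az) = 0$), annihilated by $\m$, hence a graded module over
\[
\Sc/\m\Sc \;\cong\; k[t^{-1}, X_1^*,\ldots,X_r^*]/(t^{-1}X_1^*,\ldots,t^{-1}X_r^*),
\]
with $\si_n = (0 :_{E_n}\m)$. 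Thus Theorem \ref{charp} is equivalent to the assertion that $\si_1 \ne 0$ if and only if $\si_m \ne 0$.

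The heart of the argument is to show that the nonvanishing locus $\{\,n \ge 1 : \si_n \ne 0\,\}$ contains $1$ exactly when it contains $m$. Here I would use two transport mechanisms. The multiplication maps $t^{-1}\colon \si_n \to \si_{n-1}$ and $x_i t\colon \si_n \to \si_{n+1}$ coming from the $\Sc$-module structure move a socle witness between adjacent degrees, while the graded Frobenius isomorphism $E \cong F_\Sc(E)$, whose grading convention multiplies degrees by $p$, relates $\si_n$ to $\si_{pn}$ and in particular gives $\si_n \ne 0 \iff \si_{pn}\ne 0$. Combining these with the finiteness and homogeneity of $\Ass_\Sc E$ — which pins down the degrees in which the finitely many associated primes can be detected — I would argue that the nonvanishing locus is saturated in the range $1 \le n \le m$, forcing the desired equivalence. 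As a useful base point, note that $E_1 = H^i_I(A_1) = H^i_I(R/(\xb))$ is local cohomology of the regular ring $R/(\xb)$, so $\Ass_R E_1$ is already known to be finite.

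The main obstacle is precisely the control of vanishing under the two transport maps: a priori $t^{-1}$ or the $x_i t$ can annihilate a given socle element, so moving a witness from degree $m$ down to degree $1$ (or conversely building one up) is not automatic. The role of $F_\Sc$-finiteness is exactly to forbid the pathological scenario in which $\si$ is nonzero in some degrees of $[1,m]$ but vanishes in others; making this rigorous — that is, extracting from the finite set $\Ass_\Sc E$ together with the Frobenius action the statement that $\si_1$ and $\si_m$ vanish simultaneously — is the step that requires the full strength of the graded $F$-module machinery set up in \ref{setup-p}.
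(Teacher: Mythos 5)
There is a genuine gap here, and you in effect flag it yourself in your last paragraph: the entire content of the theorem is the rigidity statement that nonvanishing of the socle in one degree forces nonvanishing in the other degrees, and the mechanisms you propose do not deliver it. Your packaging step is fine and matches the paper ($E_n = H^i_I(A_n)$, and since $\m$ is maximal, $\m \in \Ass_R E_n$ iff $(0:_{E_n}\m) \neq 0$). But the Frobenius step is unjustified as stated: the structure isomorphism $E \cong F_\Sc(E)$ carries the $\m$-socle $(0:_E\m)$ into $(0:_{F_\Sc(E)}\m^{[p]})$, the socle with respect to the Frobenius power, not the $\m$-socle, so it does not give $\si_n \neq 0 \iff \si_{pn} \neq 0$; and even if it did, combining it with the transport maps $t^{-1}$ and $x_it$ (which can kill any given socle element) connects degree $n$ only to $n \pm 1$ and $pn$ with no control of vanishing --- exactly the obstacle you acknowledge but do not overcome. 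Finiteness of $\Ass_\Sc E$ cannot close this gap either: the relevant homogeneous prime is the single prime $\m\Sc + (t^{-1})$, and its membership in $\Ass_\Sc E$ carries no information about \emph{which} graded components witness $\m$ as an $R$-associated prime, which is precisely the question. So the "saturation on $[1,m]$" claim is an assertion, not an argument.

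The paper closes this gap by a different reduction. It does not work with $E$ and its socle directly; it passes to $V = \ker\bigl(t^{-1}\colon H^i_{I\Sc}(W)(1) \rt H^i_{I\Sc}(W)\bigr)$, which by \cite[4.1]{P2} is a graded $F_G$-finite $F_G$-module over the associated graded ring $G$ (a regular ring), and uses the exact sequences $0 \rt V_n \rt H^i_I(A_{n+1}) \rt H^i_I(A_n)$ to sandwich $\Ass H^i_I(A_n)$ between $\Ass V_{n-1}$ and $\bigcup_{j=0}^{n-1}\Ass V_j$. Then, instead of the full socle, it takes top Koszul homology $H_{d-r}(\n, V)$ with respect to only the variables $\n = (X_{r+1},\ldots,X_d)$ of $G_0$; by \cite[1.1]{P2} this is a graded $F_T$-finite module over the polynomial ring $T = K[X_1^*,\ldots,X_r^*]$, and the rigidity theorem \cite[1.6(b)]{P2} --- a graded component of such a module is nonzero in some degree $\geq 0$ iff it is nonzero in \emph{all} degrees $\geq 0$ --- is the engine your sketch is missing. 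That theorem is a substantive result about $F$-finite modules over polynomial rings (its characteristic-zero analogue is the generalized Eulerian statement \cite[6.1]{P1}) and does not follow from finiteness of associated primes together with degree bookkeeping; without it, or a proof of an equivalent rigidity statement for your $\si$, your argument does not go through.
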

\begin{proof}
We may assume $K$ is infinite.
We note that $H^i_{I\Sc}(W) = \bigoplus_{n \geq 1} H^i_I(R/(\xb)^n)$ is a $F_\Sc$-finite, $F_\Sc$-module. By \cite[4.1]{P2} it follows that $$V = \ker(t^{-1} \colon H^i_{I\Sc}(W)(1) \rt H^i_{I\Sc}(W))$$
is a graded $F_G$-finite $F_G$-module. For $n \geq 0$ we have exact sequences:
\[
0 \rt V_n \rt H^i_I(R/(\xb)^{n+1}) \rt H^i_I(R/(\xb)^n).
\]
We also have $V_0 = H^i_I(R/(\xb))$. An easy induction yields
$$\Ass V_{n-1} \subseteq \Ass H^i_I(R/(\xb)^n) \subseteq \bigcup_{m = 0}^{n-1}\Ass V_m.$$

Let $\n = (X_{r+1}, \cdots, X_d)$ be maximal ideal of $G_0$. We note that $\m \in \Ass_R V_m$ if and only if  the Koszul homology module $H_{d - r}(\n, V_m) \neq 0$. Set $T = K[X_1^*, \ldots, X_r^*]$. By \cite[1.1]{P2} we have that $H_{d-r}(\n, V)$ is a  graded $F_T$-finite, $F_T$-module. In particular we have by  \cite[1.6(b)]{P2} that $H_{d-r}(\n, V)_m = H_{d-r}(\n, V_m)$ is non-zero for some $m \geq 0$ if and only if $ H_{d-r}(\n, V_j)$ is non-zero for ALL $j \geq 0$.

We now prove the result:

If $\m \in \Ass_R H^i_I(A_1)$ then as $V_0 = H^i_I(R/(\xb))$ we have $\m \in \Ass V_0$. So \\ $H_{d-r}(\n, V)_0 \neq 0$. It follows that $ H_{d-r}(\n, V_{m-1})$ is non-zero. So\\
$\m \in \Ass H^i_I(R/(\xb)^m)$.

Conversely if $\m \in \Ass_R H^i_I(A_m)$ then $\m \in \bigcup_{j = 0}^{m-1}\Ass V_j$. Say $\m \in \Ass V_l$. Then by above argument $\m \in \Ass V_j$ for all $j \geq 0$. In particular $\m \in \Ass V_0$ and as $V_0 = H^i_I(R/(\xb))$ the result follows.
\end{proof}

\section{the case when $\charp k =  0$}
\s \label{setup-0} Let $K$ be a field of characteristic  $0$. Let $R = K[[X_1, \ldots, X_d]]$ and let $\xb = X_1, \ldots, X_r$. Let $\m = (X_1, \ldots, X_d)$. Set $G = G_\xb(R)$ be the associated graded ring of $R$ \wrt \ $\xb$. Note $G \cong R/(\xb)[X_1^*, \ldots, X_r^*]$ is a regular ring. Let $\Sc = R[(\xb)t, t^{-1}]$ be the extended Rees algebra of the ideal $(\xb)$. We note that $\Sc$ is a regular ring. Let $D_0 = $ ring of $K$-linear differential operators on $G_0 = k[[X_{r+1}, \ldots, X_d]]$. Let $D $ be the $r^{th}$ Weyl algebra over $D_0$ with the obvious grading. Then note that $D$ is a ring of $D$-linear differential operators on $G$.
We note that $D$ is a ring of differential operators of $G$ and  $\Sc$. Also note that $t^{-1}$ commutes with the action of differential operators.
 We have $\Sc$ is a  $D$-module. Also note that
$\Sc_{t^{-1}} = R[t, t^{-1}]$ is also a $D$-module. So $W = \Sc_{t^{-1}}/\Sc = \bigoplus_{n \geq 1}R/(\xb)^n$ is also a  $D$-module.
Fix $m \geq 1$. Set $A_m = R/(\xb)^m$.

We show
\begin{theorem}\label{char0}(with hypotheses as in \ref{setup-0})
Let $I$ be an idealin $R$
The following assertions are equivalent:
\begin{enumerate}[\rm (i)]
  \item  $\m \in \Ass_R H^i_I(A_1)$.
  \item $\m \in \Ass_R H^i_I(A_m)$.
\end{enumerate}
\end{theorem}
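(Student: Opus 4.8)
The plan is to run the argument of Theorem \ref{charp} essentially verbatim, replacing the category of graded $F$-finite $F$-modules by that of graded holonomic $D$-modules at every stage. As set up in \ref{setup-0}, $W = \Sc_{t^{-1}}/\Sc = \bigoplus_{n \geq 1} R/(\xb)^n$ is a graded $D$-module, and since the local cohomology of a $D$-module is again a $D$-module,
\[
H^i_{I\Sc}(W) = \bigoplus_{n \geq 1} H^i_I(R/(\xb)^n)
\]
is a graded $D$-module. Because $t^{-1}$ commutes with the action of the differential operators,
\[
V = \ker\bigl(t^{-1} \colon H^i_{I\Sc}(W)(1) \rt H^i_{I\Sc}(W)\bigr)
\]
is again a graded $D$-module, now annihilated by $t^{-1}$ and hence a graded module over the ring of differential operators of $G$. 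This is the characteristic-zero analogue of \cite[4.1]{P2}, and I would establish it in the $D$-module setting in place of the $F$-module one. Note that here $K \supseteq \mathbb{Q}$ is automatically infinite, so no preliminary reduction on the residue field is needed.

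The formal part of the argument is then unchanged. The short exact sequences
\[
0 \rt V_n \rt H^i_I(R/(\xb)^{n+1}) \rt H^i_I(R/(\xb)^n),
\]
valid for $n \geq 0$, together with $V_0 = H^i_I(R/(\xb))$, give by induction
\[
\Ass V_{n-1} \subseteq \Ass H^i_I(R/(\xb)^n) \subseteq \bigcup_{m=0}^{n-1}\Ass V_m.
\]
Writing $\n = (X_{r+1}, \ldots, X_d)$ for the maximal ideal of $G_0$, we have $\m \in \Ass_R V_m$ if and only if the Koszul homology $H_{d-r}(\n, V_m) \neq 0$. The crux is then to prove the $D$-module versions of \cite[1.1]{P2} and \cite[1.6(b)]{P2}: that $H_{d-r}(\n, V)$ is a graded holonomic module over the Weyl algebra on $X_1^*, \ldots, X_r^*$, and that such a module has the rigidity property
\[
H_{d-r}(\n, V)_m \neq 0 \text{ for some } m \geq 0 \iff H_{d-r}(\n, V)_j \neq 0 \text{ for all } j \geq 0.
\]
Granting this, both implications follow exactly as in Theorem \ref{charp}: if $\m \in \Ass_R H^i_I(A_1)$ then $\m \in \Ass V_0$, so $H_{d-r}(\n, V)_0 \neq 0$, whence $H_{d-r}(\n, V_{m-1}) \neq 0$ and $\m \in \Ass H^i_I(A_m)$; conversely, if $\m \in \Ass_R H^i_I(A_m)$ then $\m \in \Ass V_l$ for some $0 \leq l \leq m-1$, so by rigidity $\m \in \Ass V_0 = \Ass H^i_I(A_1)$.

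The main obstacle is precisely this last point. In characteristic $p$ the rigidity of the graded components is a structural consequence of the $F$-module theory via \cite[1.6(b)]{P2}; in characteristic zero it must instead be deduced from the theory of graded holonomic $D$-modules. Concretely, the two things to verify are, first, that forming the top Koszul homology $H_{d-r}(\n, -)$ carries a graded holonomic $D$-module to a graded holonomic module over the Weyl algebra on the variables $X_1^*, \ldots, X_r^*$ (the analogue of \cite[1.1]{P2}), and second, that a nonzero graded holonomic module over this Weyl algebra cannot vanish in any degree $n \geq 0$ once it is nonzero in one. Both are the natural $D$-module counterparts of the results quoted from \cite{P2} in the positive characteristic proof, and once they are in hand the remainder of the argument is identical.
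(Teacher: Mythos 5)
Your overall architecture coincides with the paper's: form $V = \ker(t^{-1}\colon H^i_{I\Sc}(W)(1) \rt H^i_{I\Sc}(W))$, run the same exact sequences and Koszul-homology reduction, and conclude via a rigidity property of the graded components. But there is a genuine gap at exactly the point you flag as the crux: rigidity is \emph{false} for arbitrary graded holonomic modules over the Weyl algebra, so the two ``analogues of \cite[1.1]{P2} and \cite[1.6(b)]{P2}'' cannot be established in the generality you state. Concretely, take $E = H^1_{(X)}(K[X])$ over the first Weyl algebra; it is graded holonomic with $E_n \neq 0$ exactly for $n \leq -1$. A shift such as $E(-1)$ is still graded holonomic, is nonzero in degree $0$, and vanishes in every degree $j \geq 1$, contradicting your proposed statement that nonvanishing in one degree $m \geq 0$ forces nonvanishing in all $j \geq 0$. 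Holonomicity is shift-invariant; the rigidity of graded components is not. The extra hypothesis that makes it work is the \emph{generalized Eulerian} property (which shifts destroy), and your proposal never produces it: you only observe that $V$ is a graded $D$-module annihilated by $t^{-1}$.

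The paper secures this missing input by using the short exact sequence $0 \rt G \rt W(1) \rt W \rt 0$ of graded $D$-modules: the long exact sequence in $H^i_{I\Sc}(-)$ exhibits $V$ as a graded quotient of $H^i_I(G)$, and hence as a graded \emph{holonomic generalized Eulerian} $D$-module. (Note your blanket assertion that $H^i_{I\Sc}(W)$ itself is holonomic is neither justified nor needed; only $V$ must be controlled, and only via this quotient description.) With that in hand, the two lemmas you propose to prove from scratch are already theorems in the literature: \cite[1.20]{P1} shows the Koszul homology $H_{d-r}(\n, V)$ is a graded generalized Eulerian holonomic module over the Weyl algebra in $r$ variables, and \cite[6.1]{P1} gives the rigidity of its graded components in degrees $\geq 0$. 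So the repair is twofold: route $V$ through $H^i_I(G)$ to obtain the generalized Eulerian property, and replace ``holonomic'' by ``holonomic generalized Eulerian'' throughout; after that the formal argument you give goes through verbatim.
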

\begin{proof}
We note that
$$0 \rt G \rt  W(1) \rt W \rt 0,$$
is a short exact sequence of $D$-modules. So the long exact sequence in cohomology $H^i_{I\Sc}(-)$ is a sequence graded of $D$-modules.
 As $t^{-1}$ commutes with action of $D$  it follows that $$V = \ker(t^{-1} \colon H^i_{I\Sc}(W)(1) \rt H^i_{I\Sc}(W))$$
is a graded $D$-module. As $V$ is a graded quotient of $H^i_I(G)$ it follows that $V$ is a
graded holonomic generalized Eulerian $D$-module.
 For $n \geq 0$ we have exact sequences:
\[
0 \rt V_n \rt H^i_I(R/(\xb)^{n+1}) \rt H^i_I(R/(\xb)^n).
\]
We also have $V_0 = H^i_I(R/(\xb))$. An easy induction yields
$$\Ass V_{n-1} \subseteq \Ass H^i_I(R/(\xb)^n) \subseteq \bigcup_{m = 0}^{n-1}\Ass V_m.$$

Let $\n = (X_{r+1}, \cdots, X_d)$ be maximal ideal of $G_0$. We note that $\m \in \Ass_R V_m$ if and only if the Koszul homology module  $H_{d - r}(\n, V_m) \neq 0$. Set $T = K[X_1^*, \ldots, X_r^*]$. By \cite[1.20]{P1}, we have that $H_{d-r}(\n, V)$ is a graded generalized Eulerian holonomic module over the Weyl algebra in $r$-variables over $K$. In particular we have by \cite[6.1]{P1}  that $H_{d-r}(\n, V)_m = H_{d-r}(\n, V_m)$ is non-zero for some $m \geq 0$ if and only if $ H_{d-r}(\n, V_j)$ is non-zero for ALL $j \geq 0$.

Rest of the argument is exactly similar as in \ref{charp}.
\end{proof}
\section{Proof of Result stated in abstract}\label{abs}
We now indicate a proof of the result stated in the abstract.
Let $I$ be the pre-image of $Q$ in $R$.  Let $\sharp E$ denote the number of elements in a set $E$.
We have by Theorem \ref{main}; $$ \sharp \Ass_{R/(\xb)^m)} H^i_Q(R/(\xb)^m) =  \sharp \Ass_R H^i_I(R/(\xb)^m) =
  \sharp \Ass_R H^i_I(R/(\xb)). $$

  We note that if $\charp k = p > 0$ then as $R/(\xb)$ is regular, $\Ass_R H^i_I(R/(\xb))$ is a finite set.

  If  $\charp k = 0$, $R$ is local or a smooth affine algebra over $k$, as $R/(\xb)$ is regular, $\Ass_R H^i_I(R/(\xb))$ is a finite set.

  The result follows.


\begin{thebibliography} {99}








\bibitem{HS}
C.~Huneke and R.~Sharp,
\emph{ Bass Numbers of Local Cohomology Modules},
 Transactions of the AMS 339, (1993), 765–-779.


\bibitem{Lyu-1}
G.~Lyubeznik,
\emph{Finiteness Properties of Local Cohomology Modules (an Application of D-modules to Commutative Algebra)},
 Inv. Math. 113, (1993), 41–-55.

\bibitem{Lyu-2}
\bysame,
\emph{F-modules: applications to local cohomology and D-modules in characteristic $p>0$},
J. Reine Angew. Math. 491, (1997), 65–-130.

\bibitem{N}
L.~N\'u\~nez-Betancourt,
\emph{Local cohomology properties of direct summands},
J. Pure Appl. Algebra 216 (2012), no. 10, 2137--2140.

\bibitem{P1}
T.~J.~Puthenpurakal,
\emph{Graded components of local cohomology modules},
Collect. Math. 73 (2022), no. 1, 135–171.

\bibitem{P2}
\bysame,
\emph{Koszul homology of $F$-finite module and applications},
eprint: arXiv:2307.04473


\bibitem{S}
A.~Singh,
\emph{$p$-torsion elements in local cohomology modules},
Math. Res. Lett. 7 (2000), no. 2-3, 165--176.

\bibitem{SS}
A.~Singh and I.~Swanson,
\emph{Associated primes of local cohomology modules and of Frobenius powers},
Int. Math. Res. Not. 2004, no. 33, 1703--1733.




\end{thebibliography}
\end{document}